\documentclass{amsart}
\usepackage{mathabx}
\usepackage{mathrsfs}
\usepackage{enumerate}
\usepackage{color}

\newtheorem{thm}{Theorem}
\newtheorem{lem}[thm]{Lemma}
\newtheorem{cor}[thm]{Corollary}
\newtheorem{defi}[thm]{Definition}
\newtheorem{rem}[thm]{Remark}

\newtheorem{quest}[thm]{Question}

\begin{document}

\title{Catalysis in the trace class and weak trace class ideals}
\author{Guillaume Aubrun}
\address{Institut Camille Jordan, Universit\'e Claude Bernard Lyon 1, 43 boulevard du 11 novembre 1918,
69622 Villeurbanne cedex, France}
\email{aubrun@math.univ-lyon1.fr}
\author{Fedor Sukochev}
\address{School of Mathematics and Statistics, University of NSW, Sydney,  2052, Australia}
\email{f.sukochev@unsw.edu.au}
\author{Dmitriy Zanin}
\address{School of Mathematics and Statistics, University of NSW, Sydney,  2052, Australia}
\email{d.zanin@unsw.edu.au}

\thanks{The research of GA was supported by the ANR projects OSQPI (ANR-11-BS01-0008) and StoQ (ANR-14-CE25-0003). The research of FS and DZ has been supported by the ARC projects DP140100906 and DP 120103263.}


\begin{abstract}
Given operators $A,B$ in some ideal $\mathcal{I}$ in the algebra $\mathcal{L}(H)$ of all bounded operators on a 
separable Hilbert space  $H$, can we give conditions guaranteeing 
the existence of a trace-class operator $C$
such that $B \otimes C$ is submajorized (in the sense of Hardy--Littlewood) by $A \otimes C$ ? In the case when 
$\mathcal{I} = \mathcal{L}_1$, a necessary and almost
sufficient condition is that the inequalities ${\rm Tr} (B^p) \leq {\rm Tr} (A^p)$ hold for every $p \in [1,\infty]$. We show
that the analogous statement fails for $\mathcal{I} = \mathcal{L}_{1,\infty}$ by connecting it with the study of Dixmier traces.
\end{abstract}

\maketitle

\section{Introduction}

Let $H$ be an infinite-dimensional separable Hilbert space, $\mathcal{L}(H)$ be the algebra of all bounded operators on $H$ and
$\mathcal{C}_0=\mathcal{C}_0(\mathcal{H})$ the set of compact operators. 

Given $A \in \mathcal{C}_0$, we denote by 
$\mu(A):=\{\mu(k,A)\}_{k \geq 0}$ the sequence of singular values of the operator $A$ 
(that is, eigenvalues of the operator $|A|$) arranged in the decreasing order and taken with multiplicities (if any).
We say that $B\in \mathcal{C}_0$ is submajorized by $A\in \mathcal{C}_0$ in the sense of Hardy--Littlewood 
(written $B\prec\prec A$) if for every integer $n$
$$\sum_{k=0}^{n}\mu(k,B)\leq\sum_{k=0}^{n}\mu(k,A).$$

If $A,B \in \mathcal{C}_0$ are such that $B \prec \prec A$, then $B \otimes C \prec \prec A \otimes C$ for every
$C \in \mathcal{C}_0.$\footnote{Suppose first that $C\geq0$ has finite rank.  That is, $C=\sum_{k=0}^{n-1}\mu(k,C)p_k,$ where $p_k,$ $0\leq k<n,$ are pairwise orthogonal rank one projections. Set $A_k=A\otimes\mu(k,C)p_k$ and $B_k=B\otimes\mu(k,C)p_k.$ It is immediate that $B_k\prec\prec A_k$ for $0\leq k<n.$ It follows from Lemma 2.3 in \cite{CDS} that $\sum_{k=0}^{n-1}B_k\prec\prec\sum_{k=0}^{n-1}A_k$ or, equivalently, $B\otimes C\prec\prec A\otimes C.$ For an arbitrary $C,$ the assertion follows by approximation.  }The converse does not hold, even in the finite-dimensional
setting: if $A,B,C$ are such that $\mu(A)=(0.5,0.25,0.25,0,\cdots)$, $\mu(B) = (0.4,0.4,0.1,0.1,0,\cdots)$ and 
$\mu(C)=(0.6,0.4,0,\cdots)$, 
one checks easily that $B \otimes C \prec \prec A \otimes C$ while $B$ is not submajorized by $A$. This 
example appears in \cite{JP} and is related to the phenomenon of \emph{catalysis} in quantum information theory (the
operator $C$ being called a catalyst). This corresponds to the situation where the transformation of some quantum state 
(in that case, $B$) into another quantum state (in that case, $A$) is only possible in the presence of an extra quantum state 
(in that case, $C$) although the latter is not consumed in the process.
It is argued in \cite{JP} that this phenomenon can be used to improve the efficiency of entanglement
concentration procedures.

In the following we restrict ourselves to $A,B$ being positive elements in $\bigcap_{p>1} \mathcal{L}_p$ ($\mathcal{L}_p$ denoting 
the Schatten--von Neumann ideal) and compare the following statements.
\begin{enumerate}[{\rm (i)}]
\item\label{cim2} There exists a nonzero $C \in \mathcal{L}_1$ such that $B \otimes C \prec \prec A \otimes C$.
\item\label{cim3} For every $p>1,$ we have ${\rm Tr} (B^p) \leq {\rm Tr} (A^p)$.
\end{enumerate}


One checks  that \eqref{cim2} implies \eqref{cim3}. This follows from the monotonicity of $A \mapsto {\rm Tr} (A^p)$ with respect
to submajorization and from the formula
$${\rm Tr} (S \otimes T) = {\rm Tr} (S) \cdot {\rm Tr} (T),\quad S,T \in \mathcal{L}_1.$$
There is some hope to reverse the implication $\eqref{cim2}\Rightarrow\eqref{cim3}$ if we 
 allow  closure of the set
$$\{B:\exists C \in \mathcal{L}_1\mbox{ such that }B \otimes C \prec \prec A \otimes C\}$$
with respect to some topology (for the finite-dimensional case, see \cite{Au-Nec,Klimesh,Turgut}).

To explain why some closure is needed, we give an example of a pair $A,B$ of positive operators satisfying (ii) but
not (i). Consider positive operators with $\mu(A) \neq \mu(B)$ and 
such that ${\rm Tr} (B^p) \leq {\rm Tr} (A^p)$ for $p \in (1,\infty)$, 
while ${\rm Tr} (B^{p_0}) = {\rm Tr} (A^{p_0})$ for some $p_0 \in (1,\infty)$ (such an example exists among
finite rank operators). 
Note that the norm in $\mathcal{L}_{p_0}$ is strictly monotone with respect to submajorization 
(see Proposition 2.1 in \cite{CDSS}). That is, if $K\in\mathcal{L}_{p_0}(H)$ and if 
$L\prec\prec K,$ then either $\mu(L)=\mu(K)$ or $\|L\|_{p_0}<\|K\|_{p_0}.$ Suppose that (i) holds, i.e. that
$B \otimes C \prec \prec A \otimes C$ for some nonzero $C \in \mathcal{L}_1$ (that is, no closure is taken). 
We then have
${\rm Tr} ((B \otimes C)^{p_0}) = {\rm Tr} ((A \otimes C)^{p_0})$ and, by strict monotonicity, $\mu(k,B \otimes C) = \mu(k,A \otimes C)$ for all $k\geq0.$ Now, taking into account that the sequences  $\mu(B \otimes C)$  and 
$\mu(A \otimes C)$ coincide with decreasing rearrangements of sequences  $\mu(B )\otimes \mu(C)$   and    $\mu(A )\otimes \mu(C)$  respectively, we infer that 
$\mu(A)=\mu(B).$ 

As we shall see, the choice of the topology plays a crucial role. 
Prior to stating the precise question, we recall a few definitions and relevant facts.

There is a remarkable correspondence between sequence spaces and two-sided ideals in $\mathcal{L}(H)$ due to J.W.~Calkin, 
\cite{Calkin}.  Recall,
that a linear subspace $\mathcal J$ in $\mathcal{L}(H)$ is a two-sided ideal if $X \in \mathcal {J}$ and $Y \in \mathcal{L}(H)$ imply $YX, XY \in \mathcal J.$ Every non-trivial ideal necessarily consists of compact operators. A Calkin space $J$ is a subspace of $c_0$ (the space of all vanishing sequences) such
that $x \in J$ and $\mu(y)\leq\mu(x)$ imply $y \in J$, where $\mu(x)$ is the decreasing rearrangement of the sequence $|x|$. The Calkin correspondence may be explained as follows. If $J$ is a
Calkin space then associate to it the subset $\mathcal J$ in $\mathcal{L}(H)$
$$\mathcal{J}:=\{ X \in \mathcal C_0 : \mu(X) \in J \} .$$
Conversely, if $\mathcal J$ is a two-sided ideal, then associate to it the sequence space
$$J:=  \{ x \in c_0 : \mu(x)=\mu(X)\ \mbox{for some}\ X \in \mathcal J \} .$$
For the proof of the following theorem we refer to Calkin's original paper, \cite{Calkin}, and to B.~Simon's book, \cite[Theorem 2.5]{Simon}.

\begin{thm}[Calkin correspondence]\label{Calkin correspondence}
The correspondence $J \leftrightarrow \mathcal J$ is a bijection between Calkin spaces and
two-sided ideals in $\mathcal{L}(H).$
\end{thm}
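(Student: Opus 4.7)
The plan is to establish three things: (a) if $J$ is a Calkin space, then $\mathcal{J}:=\{X\in\mathcal{C}_0:\mu(X)\in J\}$ is a two-sided ideal; (b) if $\mathcal{J}$ is a two-sided ideal, then $J:=\{x\in c_0:\mu(x)=\mu(X)\text{ for some }X\in\mathcal{J}\}$ is a Calkin space; (c) these two assignments are mutual inverses. Throughout I would use the standard singular value inequalities $\mu(k,AXB)\leq\|A\|\,\|B\|\,\mu(k,X)$ for $A,B\in\mathcal{L}(H)$ and the Ky Fan type bound $\mu(k,X+Y)\leq\mu(\lfloor k/2\rfloor,X)+\mu(\lfloor k/2\rfloor,Y)$, together with the fact that every positive compact operator can be diagonalized in an orthonormal basis.

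For direction (a), absorption of $\mathcal{J}$ under left and right multiplication by elements of $\mathcal{L}(H)$ is immediate from the first inequality combined with the Calkin space axioms for $J$. For closure under addition I would first establish the auxiliary fact that every Calkin space $J$ is invariant under dyadic dilation: for $x\in J$, the sequence $d(k):=\mu(x)(\lfloor k/2\rfloor)$ also lies in $J$. Indeed, splitting $d$ into its even- and odd-indexed parts produces two sequences each of which has decreasing rearrangement equal to $\mu(x)$, hence each lies in $J$ by solidity, and the sum $d$ then lies in $J$ because $J$ is a subspace. Combining dilation invariance with the Ky Fan bound yields $\mu(X+Y)\in J$ whenever $X,Y\in\mathcal{J}$.

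For direction (b), the central technical tool is a lifting lemma: if $X\in\mathcal{J}$ and $\nu\in c_0$ is decreasing with $\nu\leq\mu(X)$ pointwise, then there exists $Y\in\mathcal{J}$ with $\mu(Y)=\nu$. Diagonalizing $|X|=\sum_k\mu(k,X)|f_k\rangle\langle f_k|$, let $T$ be the bounded diagonal operator with $Tf_k=(\nu(k)/\mu(k,X))f_k$ (and $Tf_k=0$ when the denominator vanishes), and set $Y:=T|X|$; the ideal property gives $Y\in\mathcal{J}$ and one checks $\mu(Y)=\nu$. Solidity of $J$ follows at once. For the subspace property, given $x,y\in J$ with operator representatives $X,Y\in\mathcal{J}$, I would form the direct sums $X\oplus X$ and $Y\oplus Y$ on $H\oplus H\cong H$ (which remain in $\mathcal{J}$ by the ideal property) and then unitarily move them to a common diagonal basis, yielding $X',Y'\in\mathcal{J}$ with diagonal entries $\mu(x)(\lfloor k/2\rfloor)$ and $\mu(y)(\lfloor k/2\rfloor)$; the sum $X'+Y'\in\mathcal{J}$ is diagonal with entries $\mu(x)(\lfloor k/2\rfloor)+\mu(y)(\lfloor k/2\rfloor)$, which pointwise dominates $\mu(x+y)$ by the sequence Ky Fan inequality, whence $x+y\in J$ via solidity.

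For (c), round-tripping from a Calkin space $J$ yields $\{x:\mu(x)\in J\}$, which equals $J$ by solidity together with the identity $\mu(\mu(x))=\mu(x)$. Starting from an ideal $\mathcal{J}$, the inclusion $\mathcal{J}\subseteq\{X:\mu(X)\in J\}$ is trivial, and the reverse inclusion rests on unitary equivalence: if $\mu(X)=\mu(Y)$ with $Y\in\mathcal{J}$, then a partial isometry $W$ intertwining the spectral bases of $|Y|$ and $|X|$ gives $|X|=W|Y|W^*\in\mathcal{J}$, and hence $X=U|X|\in\mathcal{J}$ via the polar decomposition. I expect the most delicate point to be the closure of $J$ under addition in (b): the Ky Fan bound furnishes only submajorization rather than pointwise domination of $\mu(x+y)$, so the argument must be rerouted through a dyadic dilation and a common diagonalization inside $\mathcal{J}$, which is precisely where the full ideal structure of $\mathcal{J}$ inside $\mathcal{L}(H)$ genuinely enters the picture.
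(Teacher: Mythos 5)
The paper gives no proof of this theorem, referring instead to Calkin's original article and to Simon's book; your argument is essentially the standard proof found in those references, and it is correct. All the expected ingredients are present --- the dyadic dilation trick combined with the Ky Fan inequality for closure of $\mathcal{J}$ under addition, the lifting lemma $Y=T|X|$ for solidity of $J$, the observation that $X\oplus X\in\mathcal{J}$ (via $X\oplus 0=\iota_1 X\pi_1$ and linearity) for closure of $J$ under addition, and partial isometries for the round trip --- so there is nothing to flag beyond the routine bookkeeping you have already identified.
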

In the recent papers \cite{KS}, \cite{S}  this correspondence has been specialised to quasi-normed symmetrically-normed
ideals and quasi-normed symmetric sequence spaces \cite{LT1}. We use the notation $\|\cdot\|_\infty$ to denote the
uniform norm on $\mathcal{L}(H)$.

\begin{defi}\label{sn ideal}
\begin{enumerate}[{\rm (i)}]
\item An ideal $\mathcal{E}$ in $\mathcal{L}(H)$ is said to be symmetrically (quasi)-normed
if it is equipped with a Banach (quasi)-norm $\|\cdot\|_{\mathcal{E}}$ such that
$$\|XY\|_{\mathcal{E}},\|YX\|_{\mathcal{E}}\leq\|X\|_{\mathcal{E}}\|Y\|_{\infty},\quad X\in\mathcal{E},Y\in \mathcal{L}(H).$$
\item A Calkin space $E$ is a symmetric sequence space if it is equipped with a Banach (quasi)-norm $\|\cdot\|_{{E}}$ such that
$\|y\|_E\leq \|x\|_E$ for every $x\in E$ and $y\in c_0$ such that
$\mu(y)\leq\mu(x)$.
\end{enumerate}
\end{defi}

For convenience of the reader, we recall that a map
$\|\cdot\|$ from a linear space $X$ to $\mathbb{R}$ is a quasi-norm, if for all
$x,y\in X$ and scalars $\alpha$ the following properties hold:
\begin{enumerate}[{\rm (i)}]
\item $\|x\|\geqslant 0$, and $\|x\|=0 \Leftrightarrow x=0$;
\item $\|\alpha x\|=|\alpha|\|x\|$;
\item $\|x+y\|\leqslant C(\|x\|+\|y\|)$ for some $C\geqslant 1$.
\end{enumerate}
The couple $(X,\|\cdot\|)$ is a quasi-normed space and the least
constant $C$  satisfying the  inequality (iii) above is called the
modulus of concavity of the quasi-norm $\|\cdot\|$ and denoted by
$C_X$. A complete quasi-normed space is called quasi-Banach.

It easily follows from Definition \ref{sn ideal} that if $(\mathcal{E}, \|\cdot\|_{\mathcal{E}})$ is a quasi-Banach ideal, $X\in\mathcal{E}$ and  $Y\in \mathcal{L}(H)$ are such
that $\mu(Y)\leq\mu(X)$, then $Y\in\mathcal{E}$ and
$\|Y\|_{\mathcal{E}}\leq\|X\|_{\mathcal{E}}$. In particular, it is
easy to see that if $E$ is Calkin space corresponding to
$\mathcal{E}$, then setting $\|x\|_E:=\|X\|_{\mathcal{E}}$ (where
$X\in \mathcal{E}$ is such that $\mu(x)=\mu(X)$) we obtain that
$(E, \|\cdot\|_E)$ is a quasi-Banach symmetric sequence space. The converse
implication is much harder and follows from main results of \cite{KS, S}. 

With these preliminaries out of the way, we are now in a position to formulate the main question.

\begin{quest}\label{meaningful question} Let $\mathcal{I}$ be a (quasi-)Banach ideal such that 
$\mathcal{I}\subset\bigcap_{p>1}\mathcal{L}_p.$ Let $0\leq A\in\mathcal{I}.$ Consider the sets
$${\rm PM}(A,\mathcal{I})=\Big\{0\leq B\in\mathcal{I}:\ {\rm Tr}(B^p)\leq{\rm Tr}(A^p)\ \forall p>1\Big\}.$$
$${\rm Catal}(A,\mathcal{I})=\Big\{0\leq B\in\mathcal{I}:\ \exists0\leq C\in\mathcal{L}_1:\ C \neq 0, \ B\otimes C\prec\prec A\otimes C\Big\}.$$
Let also $\overline{\rm Catal}(A,\mathcal{I})$ denote the closure of ${\rm Catal}(A,\mathcal{I})$ with respect to the quasi-norm of $\mathcal{I}.$ Is it true that ${\rm PM}(A,\mathcal{I}) = \overline{\rm Catal}(A,\mathcal{I})?$
\end{quest}

Note that ${\rm PM}(A,\mathcal{I})$ is a closed subset in $\mathcal{I}.$ Indeed, let $B_n\in{\rm PM}(A,\mathcal{I})$ and let $B_n\to B$ in $\mathcal{I}$ as $n\to\infty.$ 
Observe that it follows from Definition \ref{sn ideal} that $\mathcal{I}$ is continuously embedded into $\mathcal{L}(H)$ and therefore, it follows from the Closed Graph Theorem that  for every fixed $p>1,$  the identical embedding $\mathcal{I}\subset \mathcal{L}_p$ is continuous, in particular, there exists a constant $c(p,\mathcal{I})$ such that $\|C\|_p\leq c(p,\mathcal{I})\|C\|_{\mathcal{I}},$ $C\in\mathcal{I}.$ Thus,
$$\Big|\|B_n\|_p-\|B\|_p\Big|\leq\|B-B_n\|_p\leq c(p,\mathcal{I})\|B-B_n\|_{\mathcal{I}}\to0.$$ 
Hence,
$${\rm Tr}(B^p)=\lim_{n\to\infty}{\rm Tr}(B_n^p)\leq{\rm Tr}(A^p),\quad p>1.$$
We also have that ${\rm Catal}(A,\mathcal{I})\subset {\rm PM}(A,\mathcal{I}).$ Indeed, if $B\otimes C\prec\prec A\otimes C,$ then
$${\rm Tr}(B^p)=\frac{{\rm Tr}((B\otimes C)^p)}{{\rm Tr}(C^p)}\leq\frac{{\rm Tr}((A\otimes C)^p)}{{\rm Tr}(C^p)}={\rm Tr}(A^p),\quad p>1.$$
Since ${\rm PM}(A,\mathcal{I})$ is closed, it follows that the inclusion $\overline{\rm Catal}(A,\mathcal{I}) \subset {\rm PM}(A,\mathcal{I})$ always holds.

In this paper, we show that the answer to Question \ref{meaningful question} is positive when $\mathcal{I} = \mathcal{L}_1$ and negative when $\mathcal{I} = \mathcal{L}_{1,\infty}.$ Recall that $\mathcal{L}_{1,\infty}$ is the principal ideal generated by the element $A_0={\rm diag}(\{1,\frac12,\frac13,\cdots\}).$ Equivalently, 
$$\mathcal{L}_{1,\infty} = \{ A \in \mathcal{C}_0 : \ \sup_{k \geq 0} (k+1)\mu(k,A) < +\infty \}.$$
It becomes a quasi-Banach space (see e.g. \cite{KS,S}) when equipped with the quasi-norm
$$\|A\|_{1,\infty}=\sup_{k\geq0}(k+1)\mu(k,A),\quad A\in\mathcal{L}_{1,\infty}.$$

Here are our main results. We leave open the question of giving a complete description of the set $\overline{\rm Catal}(A,\mathcal{L}_{1,\infty})$.

\begin{thm}\label{thm-L1}
For every $0 \leq A \in \mathcal{L}_1$, the sets ${\rm PM}(A,\mathcal{L}_1)$ and $\overline{\rm Catal}(A,\mathcal{L}_1)$ coincide.
\end{thm}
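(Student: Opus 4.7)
The inclusion $\overline{\mathrm{Catal}}(A,\mathcal{L}_1)\subset\mathrm{PM}(A,\mathcal{L}_1)$ has already been established in the introduction, so the task is the reverse inclusion $\mathrm{PM}(A,\mathcal{L}_1)\subset\overline{\mathrm{Catal}}(A,\mathcal{L}_1)$. My plan is to reduce everything to the finite-dimensional catalytic majorization theorem of Klimesh--Turgut--Aubrun--Nechita (specifically, the $\ell_1$-closure form proved in \cite{Au-Nec}) by truncating both $A$ and $B$ and controlling all errors in $\mathcal{L}_1$-norm. Because catalysis only depends on singular values, it is enough to approximate $B$ in $\mathcal{L}_1$ by operators $B'$ that are diagonal in the eigenbasis of $B$, with decreasing nonnegative eigenvalues of my choosing; then $\|B-B'\|_1$ is literally the $\ell_1$-distance of the two eigenvalue sequences. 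By raising the hypothesis $\mathrm{Tr}(B^p)\le\mathrm{Tr}(A^p)$ to the power $1/p$ and sending $p\to\infty$ I obtain $\|B\|_\infty\le\|A\|_\infty$, and by dominated convergence as $p\to 1^+$ I obtain $\|B\|_1\le\|A\|_1$. After rescaling (the case $A=0$ is trivial) I may assume $\mu(0,A)=1$, so that every singular value of $A$ and $B$ lies in $[0,1]$.

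Fix $\varepsilon>0$. I truncate $B$: choose $N$ with $\sum_{k\ge N}\mu(k,B)<\varepsilon/3$ and let $B^{(N)}$ be the operator (diagonal in the eigenbasis of $B$) with eigenvalues $\mu(0,B),\ldots,\mu(N-1,B),0,0,\ldots$. I shrink it: choose $\eta\in(0,1)$ with $\eta\|B\|_1<\varepsilon/3$ and set $\tilde B=(1-\eta)B^{(N)}$, so $\|B-\tilde B\|_1<2\varepsilon/3$. I also truncate $A$: choose $M$ with $\sum_{k\ge M}\mu(k,A)<\eta/2$ and let $A^{(M)}$ be the corresponding truncation of $A$. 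Using $(1-\eta)^t\le 1-\eta$ for $t\ge 1$ (monotonicity of $t\mapsto(1-\eta)^t$), $\mu(k,A)^t\le\mu(k,A)$ for $\mu(k,A)\in[0,1]$ and $t\ge 1$, and $\|A^{(M)}\|_t^t\ge\mu(0,A)^t=1$, a direct computation gives
$$\|\tilde B\|_t^t\le(1-\eta)^t\|A\|_t^t\le(1-\eta)\|A^{(M)}\|_t^t+\sum_{k\ge M}\mu(k,A)\le\bigl(1-\tfrac{\eta}{2}\bigr)\|A^{(M)}\|_t^t$$
for every $t\ge 1$; in particular $\|\tilde B\|_t\le\|A^{(M)}\|_t$ for all $t\in[1,\infty]$.

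Now $\tilde B$ and $A^{(M)}$ are both finite-rank positive operators satisfying the full family of $\ell_t$-inequalities. I apply the finite-dimensional $\ell_1$-closure theorem of Aubrun--Nechita \cite{Au-Nec} (extended, by a short padding argument, from catalytic majorization of probability vectors to catalytic submajorization of positive vectors of possibly unequal total mass---this extension is needed here because $\|\tilde B\|_1<\|A^{(M)}\|_1$ in general). This produces a finite-rank positive $B'$ with $\|B'-\tilde B\|_1<\varepsilon/3$ and a nonzero positive finite-rank catalyst $C$ such that $B'\otimes C\prec\prec A^{(M)}\otimes C$. Since $\mu(k,A^{(M)})\le\mu(k,A)$ coordinatewise, one has $A^{(M)}\otimes C\prec\prec A\otimes C$, and so $B'\otimes C\prec\prec A\otimes C$; thus $B'\in\mathrm{Catal}(A,\mathcal{L}_1)$. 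Combining the three approximations, $\|B-B'\|_1<\varepsilon$, which proves $B\in\overline{\mathrm{Catal}}(A,\mathcal{L}_1)$.

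The genuine difficulty is the finite-dimensional input: Aubrun--Nechita state their closure theorem for probability vectors, and adapting it to positive vectors of possibly different total mass (equivalently, to catalytic submajorization in place of catalytic majorization) is the only step I expect to require real thought, though it should amount to either a short submajorization/padding lemma or a direct rerun of the proof in \cite{Au-Nec}. Everything else---the truncations, the $(1-\eta)$-perturbation, and the bookkeeping that converts $\ell_1$-closeness of singular value sequences into $\mathcal{L}_1$-closeness of operators diagonalized in a fixed basis---is routine.
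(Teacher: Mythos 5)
Your proof is correct and follows essentially the same route as the paper: truncate $A$ and $B$ to finite rank, shrink $B$ by a factor $1-\eta$ to force strict inequalities $\|\tilde B\|_t<\|A^{(M)}\|_t$ for all $t\in[1,\infty]$, invoke the finite-dimensional catalysis result of Turgut/Aubrun--Nechita, and conclude by density in $\mathcal{L}_1$. The only difference is that the paper quotes that result in a form (its Lemma~\ref{lem-aunec}, due to Turgut) which already applies to unnormalized positive finite-rank operators and yields an \emph{exact} catalyst under strict inequalities for all $p\in[1,\infty]$, so the extra approximant $B'$ and the padding/normalization extension you flag as the remaining difficulty are not actually needed.
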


\begin{thm}\label{main}
There exists $0 \leq A \in \mathcal{L}_{1,\infty}$ such that the set ${\rm PM}(A,\mathcal{L}_{1,\infty})$ strictly contains the set $\overline{\rm Catal}(A,\mathcal{L}_{1,\infty}).$
\end{thm}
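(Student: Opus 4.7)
The plan is to exhibit positive operators $A, B \in \mathcal{L}_{1,\infty}$ with $B \in \mathrm{PM}(A,\mathcal{L}_{1,\infty})$ but $B \notin \overline{\mathrm{Catal}}(A,\mathcal{L}_{1,\infty})$, the two sets being separated by a Dixmier trace. This splits into a qualitative step, identifying a Dixmier-trace obstruction to membership in $\overline{\mathrm{Catal}}$, and a quantitative step, constructing an explicit pair $(A,B)$ that violates the obstruction while still belonging to $\mathrm{PM}$.

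\textbf{The Dixmier-trace obstruction.} Let $\tau_\omega$ be a Dixmier trace on $\mathcal{L}_{1,\infty}$, obtained by averaging the logarithmic Cesaro means $\frac{1}{\log n}\sum_{k<n}\mu(k,\cdot)$ against an appropriate state $\omega$ on $\ell_\infty/c_0$. Three properties of $\tau_\omega$ are used: (i) continuity on $\mathcal{L}_{1,\infty}$, since for positive $X$ one has $\tau_\omega(X) \le \|X\|_{1,\infty}$; (ii) monotonicity under Hardy--Littlewood submajorization on positive operators; and (iii) the product formula $\tau_\omega(X\otimes Y) = \tau_\omega(X)\,\mathrm{Tr}(Y)$ for $0 \le X \in \mathcal{L}_{1,\infty}$ and $0 \le Y \in \mathcal{L}_1$. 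Property (iii) is established by first handling finite-rank $Y = \sum_{j<n}\mu(j,Y)p_j$ with pairwise orthogonal rank-one projections: since $X \otimes p_j$ is unitarily equivalent to $X$, one obtains $\tau_\omega(X \otimes Y) = \sum_{j<n}\mu(j,Y)\tau_\omega(X) = \mathrm{Tr}(Y)\,\tau_\omega(X)$; for arbitrary $Y \in \mathcal{L}_1^+$ one approximates by finite-rank truncations $Y_n$ and invokes a tensor-product estimate $\|X \otimes (Y-Y_n)\|_{1,\infty} \le c\,\|X\|_{1,\infty}\|Y - Y_n\|_1$ together with the continuity of $\tau_\omega$. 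Armed with these, if $B_n \to B$ in $\mathcal{L}_{1,\infty}$ and $B_n \otimes C_n \prec\prec A \otimes C_n$ with $0 \neq C_n \in \mathcal{L}_1^+$, then monotonicity and the product formula yield $\tau_\omega(B_n)\,\mathrm{Tr}(C_n) \le \tau_\omega(A)\,\mathrm{Tr}(C_n)$; cancelling the positive scalar $\mathrm{Tr}(C_n)$ and passing to the limit give $\tau_\omega(B) \le \tau_\omega(A)$.

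\textbf{Construction of the counterexample.} The counterexample will exploit the fact that the family $\{\mathrm{Tr}(X^p) : p > 1\}$ captures $\mathcal{L}_p$-type (Abel) averages of $\mu(X)$, while Dixmier traces capture logarithmic Cesaro averages; the two families agree when the Cesaro average exists, but diverge when $\mu(X)$ has genuine oscillation on the logarithmic scale. The idea is to design $A$ with a multi-scale block structure of singular values so that the logarithmic Cesaro averages $\frac{1}{\log n}\sum_{k<n}\mu(k,A)$ have two distinct subsequential limits $\alpha_- < \alpha_+$, so that $\tau_\omega(A)$ takes every value in $[\alpha_-,\alpha_+]$ as $\omega$ varies; and to choose $B$ whose Dixmier traces lie (for some $\omega$) strictly above $\alpha_-$ while its $\mathcal{L}_p$-norms are still dominated by those of $A$ for every $p > 1$. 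Achieving this requires a careful joint choice of the eigenvalue sequences of $A$ and $B$ so that the $\mathcal{L}_p$-inequalities are preserved across all exponents.

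\textbf{Main obstacle.} The decisive difficulty lies in the construction step. The $\mathcal{L}_p$-inequality as $p\to 1^+$ is governed, via a Tauberian relationship, by essentially the same logarithmic Cesaro averages that define the Dixmier trace, so a naive attempt would force $\tau_\omega(B) \le \tau_\omega(A)$ for every $\omega$ and defeat the separation. The resolution is to introduce a multi-scale ``burst'' pattern in the eigenvalue sequence of $A$: the block lengths grow rapidly enough that the logarithmic Cesaro averages genuinely oscillate, producing a non-degenerate interval of Dixmier values, while the $\mathcal{L}_p$-asymptotics for $p \to 1^+$ are dominated by the upper accumulation value $\alpha_+$ rather than by $\alpha_-$, leaving a window $(\alpha_-,\alpha_+)$ in which the Dixmier trace of $B$ can be placed without violating any $\mathcal{L}_p$ inequality. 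Finding a block schedule for which this balancing act works, and verifying the $\mathrm{Tr}(B^p)\le\mathrm{Tr}(A^p)$ inequality uniformly in $p>1$, is the main technical step I anticipate.
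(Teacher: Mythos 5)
Your qualitative step is sound and matches the paper's: Dixmier traces are continuous on $\mathcal{L}_{1,\infty}$, monotone under submajorization, and satisfy ${\rm Tr}_\omega(X\otimes Y)={\rm Tr}_\omega(X)\,{\rm Tr}(Y)$ for $Y\in\mathcal{L}_1$ (your finite-rank-plus-approximation proof of the product formula is exactly the paper's), and these facts give ${\rm Tr}_\omega(B)\le{\rm Tr}_\omega(A)$ for all $B\in\overline{\rm Catal}(A,\mathcal{L}_{1,\infty})$. One point you gloss over: to realize a prescribed $\limsup$ you must work with an \emph{arbitrary} generalized limit $\omega$, and the additivity of ${\rm Tr}_\omega$ (needed already for the finite-rank case of the product formula) is not automatic for non-dilation-invariant $\omega$; the paper handles this by a Hahn--Banach extension argument. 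This is a citable standard fact, so I would not count it as a gap on its own.

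The genuine gap is that the quantitative step --- the entire content of the theorem --- is not carried out. You correctly identify that one must separate the Abel-type means $s\,{\rm Tr}(X^{1+s})$ from the logarithmic Ces\`aro means, and you correctly identify the tension (a Tauberian heuristic suggests the two should agree), but you then state that ``finding a block schedule for which this balancing act works \dots is the main technical step I anticipate.'' That step is precisely where the theorem lives: one must \emph{prove} that the Abel means can be strictly dominated by the Ces\`aro $\limsup$ for some admissible sequence, and asserting that a suitable schedule exists is assuming the conclusion. The paper does this with the explicit operator $B=\bigoplus_{m\in I}2^{-m}\chi_{[0,2^m)}$, $I=\bigcup_{n\ge0}[2^{2n},2^{2n+1})$, and shows by a summation identity and an averaging computation that
$\limsup_{s\to0+}s\,{\rm Tr}(B^{1+s})\le 5/(9\log 2)$ while $\limsup_N\frac1{\log N}\sum_{k\le N}\mu(k,B)\ge 2/(3\log 2)$; a number $\alpha$ strictly between these two constants then yields ${\rm Tr}(B^{1+s})\le{\rm Tr}((\alpha A_0)^{1+s})$ for small $s$, the range of $p$ bounded away from $1$ being absorbed by the rank-one perturbation $A=\alpha A_0\oplus\|B\|_{1+\delta}p$. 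Note also that the paper's orientation is the reverse of yours: the \emph{oscillating} operator is $B$ (the candidate for ${\rm PM}\setminus\overline{\rm Catal}$) and $A$ is essentially a multiple of ${\rm diag}(1,\frac12,\frac13,\dots)$ with constant Dixmier trace $\alpha$; this makes the verification of ${\rm Tr}(B^p)\le{\rm Tr}(A^p)$ for all $p>1$ a one-line interpolation argument rather than the ``careful joint choice'' you anticipate. As written, your proposal is a correct plan with the decisive construction missing.
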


It is actually simple to deduce Theorem \ref{thm-L1} from the finite-dimensional considerations from \cite{Au-Nec}, as we explain in Section \ref{section:L1}. 
This is in sharp contrast with Theorem \ref{main}, whose proof is infinite-dimensional in its nature and uses crucially fine properties of Dixmier traces, which we introduce
in Section \ref{section:dixmier}. The heart of the argument behind Theorem \ref{main} appears in Section \ref{section:proof}, while we relegate some needed computations to Section \ref{section:computations}.

The authors thank the anonymous referees for careful reading of the manuscript and numerous suggestions which have improved the exposition.

\section{The case of $\mathcal{L}_1$} \label{section:L1}

We derive Theorem \ref{thm-L1} from the following result which appears in \cite{Turgut} (see also Lemma 2 in \cite{Au-Nec}).

\begin{lem} \label{lem-aunec}
Let $A,B$ be positive finite rank operators. Assume that for every $1 \leq p \leq +\infty,$ we have the strict inequality $\|B\|_p<\|A\|_p.$ Then there exists a nonzero finite rank operator $C$ such that $B \otimes C \prec \prec A \otimes C.$
\end{lem}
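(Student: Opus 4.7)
The plan is to reduce the lemma to the Klimesh--Turgut characterization of catalytic strong majorization between probability vectors, via a padding argument that converts weak submajorization of positive vectors into strong majorization of probability vectors. The catalysis property $B\otimes C \prec\prec A\otimes C$ depends only on the singular-value sequences, so set $\alpha = \mu(A) = (\alpha_1,\dots,\alpha_n)$ and $\beta = \mu(B) = (\beta_1,\dots,\beta_n)$ (padded with zeros to a common length $n$); the task becomes to produce a finite positive sequence $\gamma$ with $\beta\otimes\gamma \prec\prec \alpha\otimes\gamma$.

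Since the hypothesis at $p=1$ gives $\|\beta\|_1 < \|\alpha\|_1$, one can convert weak into strong majorization by padding. Fix a large integer $N$, set $\delta := (\|\alpha\|_1 - \|\beta\|_1)/N$, and consider
$$\tilde\alpha := (\alpha_1,\dots,\alpha_n,\underbrace{0,\dots,0}_{N}),\qquad \tilde\beta := (\beta_1,\dots,\beta_n,\underbrace{\delta,\dots,\delta}_{N}).$$
Both have length $n+N$ and equal $\ell^1$ norms, and a direct check of partial sums shows that $\beta \prec\prec \alpha$ is equivalent to the strong majorization $\tilde\beta \prec \tilde\alpha$. The next step is to verify that $\tilde\alpha$ and $\tilde\beta$ (normalised to probability vectors) satisfy the hypotheses of Klimesh--Turgut for catalytic majorization: for every $p>1$ the extra mass $N\delta^p = (N\delta)^p N^{1-p} \to 0$ as $N\to\infty$, so the strict gap $\|\alpha\|_p > \|\beta\|_p$ is inherited by $\tilde\alpha,\tilde\beta$ once $N$ is large enough; for every $0<p<1$ the same term diverges, giving the reversed strict inequality automatically; at $p=\infty$ the strict inequality $\beta_1<\alpha_1$ passes to $\|\tilde\beta\|_\infty<\|\tilde\alpha\|_\infty$ once $\delta<\beta_1$; and the support (Burg-entropy) condition at $p\to 0^+$ is met because $\tilde\beta$ has strictly larger support than $\tilde\alpha$. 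Klimesh--Turgut then furnishes a probability vector $\gamma$ with $\tilde\beta\otimes\gamma \prec \tilde\alpha\otimes\gamma$.

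Taking $C$ to be a diagonal finite-rank positive operator with $\mu(C)=\gamma$, the decreasing rearrangement of $\tilde\alpha\otimes\gamma$ coincides with that of $\alpha\otimes\gamma$ up to appended zeros, while the decreasing rearrangement of $\beta\otimes\gamma$ is a sub-collection of that of $\tilde\beta\otimes\gamma$ (only the entries $\delta\gamma_j$ are dropped, which can only decrease top-$m$ sums). Consequently
$$\sum_{k=0}^{m} \mu(k,\beta\otimes\gamma) \;\leq\; \sum_{k=0}^{m} \mu(k,\tilde\beta\otimes\gamma) \;\leq\; \sum_{k=0}^{m} \mu(k,\tilde\alpha\otimes\gamma) \;=\; \sum_{k=0}^{m} \mu(k,\alpha\otimes\gamma)$$
for every $m$, which gives $B\otimes C \prec\prec A\otimes C$. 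The main obstacle is the simultaneous verification of all Klimesh--Turgut hypotheses --- most delicately the strict inequality for $p$ near $1^+$, where both the gap $\|\alpha\|_p^p - \|\beta\|_p^p$ and the padding error $N\delta^p$ tend to $\|\alpha\|_1-\|\beta\|_1$ as $p\to 1^+$; a careful first-order expansion near $p=1$ shows the derivative of the padding error in $p$ carries an extra $-\log N$ factor, so the inequality persists for $N$ sufficiently large.
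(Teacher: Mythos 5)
The paper does not prove this lemma at all: it is quoted from Turgut and from Lemma~2 of Aubrun--Nechita, so there is no in-paper argument to compare against. Your padding reduction to the Klimesh--Turgut characterization is essentially the route taken in those references, and the skeleton is sound; in particular the final transfer step is correct, since the entries of $\beta\otimes\gamma$ form a sub-multiset of those of $\tilde\beta\otimes\gamma$ while $\tilde\alpha\otimes\gamma$ and $\alpha\otimes\gamma$ differ only by zero entries.

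The gaps are all in the verification of the Klimesh--Turgut hypotheses, which is the entire content of the proof. First, the uniformity in $p$ is not established by a ``first-order expansion near $p=1$'': the derivative bound controls $g(p)-h_N(p)$ (with $g(p)=\|\alpha\|_p^p-\|\beta\|_p^p$ and $h_N(p)=D^pN^{1-p}$, $D=\|\alpha\|_1-\|\beta\|_1$) only on a window of length $O(1/\log N)$, because the second derivative of $h_N$ near $p=1$ grows like $(\log N)^2$. One must argue in three regimes: for $(p-1)\log(N/D)\leq 1$ use $e^{-u}\leq 1-u+u^2/2$ against the Lipschitz bound $g(p)\geq D-L(p-1)$; for $(p-1)\log(N/D)>1$ with $p-1$ small use the crude bound $h_N(p)\leq D/e<g(p)$; and for $p$ bounded away from $1$ compare the exponential rates (noting that $g(p)$ itself may tend to $0$ like $\alpha_1^p$ as $p\to\infty$, so one needs $D/N<\alpha_1$, not just $h_N\to 0$). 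The claim that the $0<p<1$ inequalities hold ``automatically'' has the same non-uniformity issue as $p\to 1^-$, though there the elementary bound $h_N(p)\geq D\bigl(1+(1-p)\log(N/D)\bigr)$ does close it. Second, you omit the Shannon condition at $r=1$ entirely (it holds because the padding contributes $-D\log(N/D)\to-\infty$ to $\sum_i\tilde\beta_i\log\tilde\beta_i$), and your justification of the $r\leq 0$ conditions is off: if $\mathrm{rank}(B)<\mathrm{rank}(A)$ then $\tilde\beta$ as you define it still contains zero entries, so ``$\tilde\beta$ has strictly larger support'' is not the operative fact; what saves you is that $\tilde\alpha$ has zero entries, which under the conventions of Klimesh's theorem renders the $r\leq 0$ inequalities vacuous. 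These points are all repairable, but as written the proof does not go through without them, and they are precisely where the difficulty of the lemma lives.
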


\begin{proof}[Proof of Theorem \ref{thm-L1}] Let us show the non-trivial inclusion, i.e. that every $B \in {\rm PM}(A,\mathcal{L}_1)$ belongs to 
$\overline{\rm Catal}(A,\mathcal{L}_1).$

Let $p_k,$ $k\geq0,$ be a rank one eigenprojection of the operator $A$ which corresponds to the eigenvalue $\mu(k,A).$ Similarly, let $q_k,$ $k\geq0,$ be a rank one eigenprojection of the operator $B$ which corresponds to the eigenvalue $\mu(k,B).$ We have
$$A=\sum_{k=0}^{\infty}\mu(k,A)p_k,\quad B=\sum_{k=0}^{\infty}\mu(k,B)q_k.$$

Without loss of generality, $\mu(0,A)=1.$ It follows that 
$$(1-(1-\varepsilon)^p){\rm Tr}(A^p)\geq(1-(1-\varepsilon)^p)\mu(0,A)^p=1-(1-\varepsilon)^p\geq\varepsilon^p.$$
The latter readily implies
$${\rm Tr}(A^p)-\varepsilon^p\geq(1-\varepsilon)^p{\rm Tr}(A^p),\quad p\geq1,\quad\varepsilon\in(0,1).$$

Now, fix $\varepsilon\in(0,1)$ and select $n$ such that
$$\sum_{k=n}^{\infty}\mu(k,A)<\varepsilon,\quad \sum_{k=n}^{\infty}\mu(k,B)<\varepsilon.$$
Set
$$A_n=\sum_{k=0}^{n-1}\mu(k,A)p_k,\quad B_n=\sum_{k=0}^{n-1}\mu(k,B)q_k.$$
It is clear that
$${\rm Tr}(A_n^p)={\rm Tr}(A^p)-\sum_{k=n}^{\infty}\mu(k,A)^p\geq{\rm Tr}(A^p)-(\sum_{k=n}^{\infty}\mu(k,A))^p$$
$$>{\rm Tr}(A^p)-\varepsilon^p\geq (1-\varepsilon)^p{\rm Tr}(A^p),\quad p\geq1.$$
Therefore,
$$(1-\varepsilon)^p{\rm Tr}(B_n^p)\leq(1-\varepsilon)^p{\rm Tr}(B^p)\leq(1-\varepsilon)^p{\rm Tr}(A^p)<{\rm Tr}(A_n^p),\quad p\geq 1.$$
Since both $A_n$ and $B_n$ are finite rank operators, it follows from Lemma \ref{lem-aunec} 
and the first footnote that there exists a finite rank operator $C_n$ such that
$$(1-\varepsilon)B_n\otimes C_n\prec\prec A_n\otimes C_n\prec\prec A\otimes C_n.$$
In particular, we have that $(1-\varepsilon)B_n\in{\rm Catal}(A,\mathcal{L}_1).$ 
Observing that $\|B-B_n\|_1\leq 1$, we further obtain
$$\|B-(1-\varepsilon)B_n\|_1\leq\varepsilon\|B\|_1+(1-\varepsilon)\|B-B_n\|_1\leq\varepsilon(\|B\|_1+1).$$
Since $\varepsilon$ is arbitrarily small, it follows that $B\in\overline{\rm Catal}(A,\mathcal{L}_1).$
\end{proof}

\section{Dixmier traces} \label{section:dixmier}

The crucial ingredient in the proof is the notion of a Dixmier trace on $\mathcal{L}_{1,\infty}$. 
Let $\ell_{\infty}$ stand for the Banach space of all bounded sequences $x=(x_n)_{n\ge 0}$ 
equipped with the usual norm $\|x\|_\infty:=\sup _{n\ge 0}|x_n|$. A \emph{generalized limit} is any positive linear 
functional on $\ell_{\infty}$ which equals the ordinary limit on 
the subspace  $c$ of all convergent sequences.

\begin{rem} Given a sequence $(x_n)_{n\geq0}\in \ell_\infty$, there is a generalized limit $\omega$ such that $\omega ((x_n))= \limsup_{n\to \infty} x_n$.
\end{rem}
\begin{proof} Fix $x=(x_n)\in \ell_\infty$ and let the sequence $(n_k)_{k\ge 0}$ be such that $\lim_{k\to \infty} x_{n_k}=\limsup_{n\to \infty} x_n$. Consider the set of 
functionals $(\varphi_{n_k})_{k\ge 0}$ on $\ell_{\infty}$  defined by $\varphi_{n_k} (y_n):=y_{n_k}$, $y=(y_n)\in \ell_{\infty}$, $k\ge 0$. The set  $(\varphi_{n_k})_{k\ge 0}$ belongs to the unit ball $B$ of the Banach dual $\ell_{\infty}^*$. The set $B$ is compact in the weak$^*$ topology $\sigma(\ell_{\infty}^*,\ell_{\infty})$ and therefore the set  $(\varphi_{n_k})_{k\ge 0}$ possesses a cluster point  $\omega\in  \ell_{\infty}^*$  in that topology.  The fact that $\omega$ is a generalized limit  on $\ell_{\infty}$ such that $\omega ((x_n))= \limsup_{n\to \infty} x_n$ follows immediately from the definition of the weak$^*$ topology.
\end{proof}

The Dixmier traces are defined as follows.

\begin{thm} \label{dixmier-traces} Let $\omega$ be a generalized limit. The mapping ${\rm Tr}_{\omega}:\mathcal{L}_{1,\infty}^+\to\mathbb{R}^+$ 
defined for $0 \leq A \in\mathcal{L}_{1,\infty}$ by setting
$$ {\rm Tr}_{\omega}(A):= \omega \left( \left\{ \frac1{\log(N+2)}\sum_{k=0}^N\mu(k,A) \right\}_{N=0}^{\infty} \right)$$
is additive and, therefore, extends to a positive unitarily invariant linear functional on $\mathcal{L}_{1,\infty}$ called 
a Dixmier trace.
\end{thm}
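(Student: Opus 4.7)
The plan is to verify additivity on $\mathcal{L}_{1,\infty}^+$ directly, observe that positive homogeneity is immediate, and then extend to the whole ideal by Jordan decomposition. For $0\leq A\in\mathcal{L}_{1,\infty}$ the sequence
$$f_A(N):=\frac{1}{\log(N+2)}\sum_{k=0}^{N}\mu(k,A)$$
is uniformly bounded by $\|A\|_{1,\infty}\cdot\sup_N\frac{H_{N+1}}{\log(N+2)}$, where $H_n$ is the $n$-th harmonic number; hence $f_A\in\ell_\infty$ and ${\rm Tr}_\omega(A)=\omega(f_A)$ is well-defined and nonnegative.

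The heart of the argument is the two-sided Ky Fan type estimate, valid for any positive compact operators $A,B$:
$$\sum_{k=0}^{N-1}\mu(k,A+B)\leq\sum_{k=0}^{N-1}\mu(k,A)+\sum_{k=0}^{N-1}\mu(k,B)\leq\sum_{k=0}^{2N-1}\mu(k,A+B).$$
The left inequality is classical (variational characterization of partial sums of singular values). The right inequality is obtained by selecting $2N$ orthonormal vectors inside the sum of the top-$N$ eigenspaces of $A$ and $B$ and applying the min--max principle to $A+B$. Subtracting yields
$$0\leq\sum_{k=0}^{N-1}\mu(k,A)+\sum_{k=0}^{N-1}\mu(k,B)-\sum_{k=0}^{N-1}\mu(k,A+B)\leq\sum_{k=N}^{2N-1}\mu(k,A+B)\leq\|A+B\|_{1,\infty}\log 2,$$
where the last step uses $\mu(k,A+B)\leq\|A+B\|_{1,\infty}/(k+1)$. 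Dividing by $\log(N+2)$ we see that $f_A(N)+f_B(N)-f_{A+B}(N)\to 0$; since any generalized limit $\omega$ is linear on $\ell_\infty$ and annihilates null sequences, this gives ${\rm Tr}_\omega(A+B)={\rm Tr}_\omega(A)+{\rm Tr}_\omega(B)$.

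Once additivity on $\mathcal{L}_{1,\infty}^+$ is in hand, the extension to the whole ideal is routine. For self-adjoint $A$ write $A=A_+-A_-$ and set ${\rm Tr}_\omega(A):={\rm Tr}_\omega(A_+)-{\rm Tr}_\omega(A_-)$; if $A=A_1-A_2$ is any other decomposition with $A_i\geq0$, additivity applied to the identity $A_1+A_-=A_2+A_+$ shows the value is independent of the decomposition. Complex linearity follows from the Cartesian decomposition $A={\rm Re}(A)+i\,{\rm Im}(A)$. Unitary invariance is immediate from $\mu(UAU^*)=\mu(A)$ on positive operators and extends by linearity.

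The main obstacle is the additivity step. The subtlety is that a generic generalized limit $\omega$ need not be dilation invariant, so one cannot directly compare $f_C(N)$ with $f_C(2N)$ and the two Ky Fan bounds do not align at the same index. The tail estimate $\sum_{k=N}^{2N-1}\mu(k,A+B)=O(1)$, which is $o(\log N)$, is precisely what bridges this gap and sidesteps the need for any dilation invariance of $\omega$.
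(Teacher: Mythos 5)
Your proof is correct, but it takes a genuinely different route from the paper. The paper does not verify additivity directly: it notes that for $A\in\mathcal{L}_{1,\infty}$ the sequence $x_N(A)=\frac{1}{\log(N+2)}\sum_{k=0}^N\mu(k,A)$ satisfies $x_N(A)-x_{kN}(A)\to0$, uses this and a Hahn--Banach argument to extend the restriction of $\omega$ to a suitable subspace of $\ell_\infty$ to a \emph{scale-invariant} generalized limit $\omega'$ with ${\rm Tr}_{\omega'}={\rm Tr}_{\omega}$ on $\mathcal{L}_{1,\infty}$, and then invokes the standard additivity theorem for dilation-invariant Dixmier traces. You instead inline that standard argument: the two-sided Ky Fan estimate together with the bound $\sum_{k=N}^{2N-1}\mu(k,A+B)\leq\|A+B\|_{1,\infty}\log 2$ shows that the additivity defect is $O(1)=o(\log N)$, so any generalized limit annihilates it; your closing remark pinpoints exactly why this dispenses with scale invariance on $\mathcal{L}_{1,\infty}$ (on the larger ideal $\mathcal{M}_{1,\infty}$ the tail is only $O(\log N)$ and scale invariance becomes essential). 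Your route is more self-contained --- no Hahn--Banach extension and no external reference --- at the price of proving the less standard right-hand inequality $\sum_{k=0}^{N-1}\mu(k,A)+\sum_{k=0}^{N-1}\mu(k,B)\leq\sum_{k=0}^{2N-1}\mu(k,A+B)$; your sketch of it (pass to the projection onto the sum of the two top-$N$ spectral subspaces, of rank at most $2N$, and use ${\rm Tr}(PAP)\leq\sum_{k<{\rm rank}\,P}\mu(k,A)$ for $A\geq0$) is sound, modulo the trivial remark that this sum of subspaces may have rank strictly less than $2N$, which only helps. Both arguments ultimately rest on the same fact, namely that the tails $\sum_{k=N}^{2N}\mu(k,\cdot)$ are uniformly bounded on $\mathcal{L}_{1,\infty}$; the Jordan/Cartesian decomposition step at the end is routine and matches what the paper implicitly relies on.
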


Note that the positivity of generalized limits implies that
\begin{equation}\label{continuity}
\left| {\rm Tr}_{\omega}(A) \right| \leq \|A\|_{1,\infty}
\end{equation}
for every Dixmier trace ${\rm Tr}_{\omega}$ and $A \in \mathcal{L}_{1,\infty}$.

Let us comment on how additivity is proved in Theorem \ref{dixmier-traces}. This is usually achieved under the extra assumption that $\omega$ is scale invariant (see Theorem 1.3.1 in \cite{LSZ}), 
i.e. that $\omega \circ \sigma_k = \omega$ for all positive integer $k$, where 
$\sigma_k : \ell_{\infty} \to \ell_{\infty}$ is defined as
$$ \sigma_k(x_1,x_2,\dots,x_n,\dots) = (\underbrace{x_1,\dots,x_1}_{k \textnormal{ times}},
\underbrace{x_2,\dots,x_2}_{k \textnormal{ times}},\dots, \underbrace{x_n,\dots,x_n}_{k \textnormal{ times}},\dots).$$
Under this extra assumption the map ${\rm Tr}_{\omega}$ is actually additive on the larger ideal $\mathcal{M}_{1,\infty}$ 
(we refer to \cite[Example 1.2.9]{LSZ} for the definition of the latter ideal and to \cite[Section 6.8 ]{LSZ} for
historical background). In the form presented here, 
Theorem \ref{dixmier-traces} follows from Theorem 17 in \cite{SS}. For the reader's convenience we reproduce the argument here.

\begin{proof}[Proof of Theorem \ref{dixmier-traces}]
Given $A \in \mathcal{L}_{1,\infty}$, consider the sequence $(x_N(A))_{N=0}^{\infty}$ defined by
$$ x_N(A) = \frac{1}{\log (N+2)} \sum_{j=0}^N \mu(j,A).$$
It is not hard to check that for all all positive integer $k $, 
\begin{equation} \label{eq:two-limits} \lim_{N \to \infty} x_N(A)-x_{kN}(A) = 0. \end{equation}
Let $E \subset \ell_{\infty}$ be the subspace
$$ E = {\rm span } \left\{ \sigma_k( \{x_N(A)\}) \, : \,  k \ge 1, A \in \mathcal{L}_{1,\infty} \right\}.$$
It follows from \eqref{eq:two-limits} that the equation $\omega \circ \sigma_k (x) = \omega(x)$ is satisfied for $x \in E$.
By a version of the Hahn--Banach theorem (see \cite{Edwards}, Theorem 3.3.1), the linear functional $\omega_{|E}$ can be extended to a generalized limit
$\omega' : \ell_{\infty} \to \mathbf{R}$ which is scale-invariant. The usual argument (\cite{LSZ}, Theorem 1.3.1) implies that 
${\rm Tr}_{\omega'}$ (which coincides with ${\rm Tr}_{\omega}$ on $\mathcal{L}_{1,\infty}$) is additive on $\mathcal{L}_{1,\infty}$.
\end{proof}

We also need a version of Fubini's theorem for Dixmier traces.

\begin{thm}\label{little fubini} 
For every $A\in\mathcal{L}_{1,\infty}$ and for every $C\in\mathcal{L}_1,$ we have $A\otimes C\in\mathcal{L}_{1,\infty}$ and
\begin{equation}\label{first assertion} \| A \otimes C \|_{1,\infty} \leq \|A\|_{1,\infty} \|C\|_1 .
\end{equation}
Moreover, for every Dixmier trace ${\rm Tr}_{\omega}$ on $\mathcal{L}_{1,\infty}$, we have 
\begin{equation}\label{second assertion}{\rm Tr}_{\omega}(A\otimes C)={\rm Tr}_{\omega}(A){\rm Tr}(C).
\end{equation}
\end{thm}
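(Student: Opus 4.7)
My plan is to establish the two assertions in order, since \eqref{first assertion} is the technical core and \eqref{second assertion} follows by density once \eqref{first assertion} controls the tensor product in the $\mathcal{L}_{1,\infty}$-quasi-norm.

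For \eqref{first assertion}, I would use that $\mu(A\otimes C)$ is the decreasing rearrangement of the doubly-indexed family $\{\mu(i,A)\mu(j,C)\}_{i,j\geq 0}$, as already noted in the discussion preceding Question~\ref{meaningful question}. Setting $M:=\|A\|_{1,\infty}$, the Markov-type bound $\#\{i:\mu(i,A)>s\}\leq M/s$ combines with summation over $j$ to give, for every $t>0$,
$$\#\{(i,j):\mu(i,A)\mu(j,C)>t\} \;=\; \sum_{j:\mu(j,C)>0}\#\{i:\mu(i,A)>t/\mu(j,C)\} \;\leq\; \frac{M\|C\|_1}{t}.$$
If $t>M\|C\|_1/(k+1)$, then the right-hand side is strictly less than $k+1$; since the count is an integer it is in fact at most $k$, forcing $\mu(k,A\otimes C)\leq M\|C\|_1/(k+1)$. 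This is precisely \eqref{first assertion}.

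For \eqref{second assertion}, I would first treat $C=|\xi\rangle\langle\xi|$, a rank-one projection with $\|\xi\|=1$. Then $A\otimes C$ is unitarily equivalent to $A$ (acting on $H\otimes\mathbb{C}\xi\cong H$ and vanishing on the orthogonal complement), so $\mu(A\otimes C)=\mu(A)$ and both sides of \eqref{second assertion} equal ${\rm Tr}_\omega(A)$. For a positive $C\in\mathcal{L}_1$ with spectral decomposition $C=\sum_{j\geq 0}\mu(j,C)q_j$, the series $\sum_j \mu(j,C)\,A\otimes q_j$ converges to $A\otimes C$ in $\mathcal{L}_{1,\infty}$ thanks to \eqref{first assertion}; the continuity estimate \eqref{continuity} and the linearity of ${\rm Tr}_\omega$ then yield
$${\rm Tr}_\omega(A\otimes C) \;=\; \sum_{j\geq 0}\mu(j,C)\,{\rm Tr}_\omega(A\otimes q_j) \;=\; {\rm Tr}_\omega(A)\,{\rm Tr}(C).$$
An arbitrary $C\in\mathcal{L}_1$ is a complex linear combination of four positive elements of $\mathcal{L}_1$ via its Hermitian/anti-Hermitian and positive/negative parts, and since both sides of \eqref{second assertion} are linear in $C$, the identity propagates to all of $\mathcal{L}_1$.

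The only genuinely delicate point is the distribution-function estimate for \eqref{first assertion}; the integrality trick used to sharpen the denominator from $k$ to $k+1$ is what delivers the clean inequality without an extra multiplicative constant, and that is where I would spend most care. Once \eqref{first assertion} is in hand, the proof of \eqref{second assertion} is a routine reduction to the rank-one case followed by linearity and $\mathcal{L}_1$-approximation.
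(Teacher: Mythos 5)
Your proof is correct. For the second assertion \eqref{second assertion} you follow essentially the same route as the paper: reduce to a rank-one projection $C$ (where $\mu(A\otimes C)=\mu(A)$), extend by linearity to positive $C$ via its spectral decomposition, use \eqref{first assertion} together with the continuity estimate \eqref{continuity} to pass to the limit, and finish by linearity in $C$. The genuine difference is in the proof of \eqref{first assertion}: the paper dominates $\mu(A\otimes C)$ by $\mu(A_0\otimes C)$ with $A_0={\rm diag}(\{1,\frac12,\frac13,\cdots\})$ and then invokes Proposition 3.14 of \cite{DFWW} for the bound $\mu(k,A_0\otimes C)\leq\frac{1}{k+1}\sum_{j=0}^k\mu(j,C)$, whereas you estimate the distribution function of the product family $\{\mu(i,A)\mu(j,C)\}_{i,j}$ directly, via the Markov-type bound $\#\{i:\mu(i,A)>s\}\leq\|A\|_{1,\infty}/s$, summation over $j$, and the integrality of the counting function to recover the denominator $k+1$. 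Your route is self-contained and elementary and yields the same sharp constant $1$; the paper's is shorter at the price of an external citation. Both arguments are valid.
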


\begin{proof} 
We may assume $\|A\|_{1,\infty} = 1$. Recall that $A_0={\rm diag}(\{1,\frac12,\frac13,\cdots\})$. 
We have for all $k \geq 0$,
$$\mu(k,A\otimes C)\leq\mu(k,A_0\otimes C)\leq\frac{1}{k+1}\sum_{j=0}^k\mu(j,C)\leq\frac{\|C\|_1}{k+1}.$$
where the second inequality follows from Proposition 3.14 in \cite{DFWW}. This proves \eqref{first assertion}.

Observe that both sides of \eqref{second assertion} depend linearly on $A$ and $C$ 
(thanks to Theorem \ref{dixmier-traces}). Thus, we can assume without loss of generality that $A,C\geq0.$ 
When $C$ is a rank one projection, \eqref{second assertion} follows from Theorem \ref{dixmier-traces} since in that case $\mu(k,A \otimes C) = \mu(k,A)$ for all $k \geq 0$. Again appealing to
linearity of Dixmier traces, we infer the result for the finite rank operator $C$ and when 
$A \in \mathcal{L}_{1,\infty}$ is arbitrary.
Consider now a general $C \in \mathcal{L}_1$ and let $(C_n)$ be a sequence of finite rank operators such that $\|C-C_n\|_1 \to 0$. We
have
$$|{\rm Tr}_{\omega}(A\otimes C_n)-{\rm Tr}_{\omega}(A\otimes C)|\leq\|A\otimes (C-C_n)\|_{1,\infty}\leq\|A\|_{1,\infty}\|C-C_n\|_1$$
and this quantity tends to $0$ as $n$ goes to infinity. Consequently,
\[{\rm Tr}_{\omega}(A \otimes C) = \lim_{n \to \infty} {\rm Tr}_{\omega}(A \otimes C_n) = \lim_{n \to \infty} {\rm Tr}_{\omega}(A)
{\rm Tr} (C_n) = {\rm Tr}_{\omega}(A) {\rm Tr} (C). \qedhere \]
\end{proof}

As a corollary, we obtain that Dixmier traces give necessary conditions for catalysis.

\begin{cor} \label{corollary-dixmiertraces}
Let $0\leq A \in \mathcal{L}_{1,\infty}$ and $0\leq B \in \overline{{\rm Catal}}(A,\mathcal{L}_{1,\infty}).$ 
Then for every Dixmier trace ${\rm Tr}_{\omega},$ one has 
\begin{equation}\label{inequality-dixmier}
{\rm Tr}_{\omega}(B)\leq{\rm Tr}_{\omega}(A).
\end{equation}
\end{cor}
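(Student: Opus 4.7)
The plan is to establish the inequality in two stages: first for $B$ already in ${\rm Catal}(A,\mathcal{L}_{1,\infty})$, and then to extend to the closure by continuity of ${\rm Tr}_\omega$ in the $\|\cdot\|_{1,\infty}$ quasi-norm, as provided by \eqref{continuity}.

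For the first stage, suppose $B\otimes C\prec\prec A\otimes C$ for some nonzero $0\leq C\in\mathcal{L}_1$. The key observation is that every Dixmier trace is monotone with respect to Hardy--Littlewood submajorization on the positive cone of $\mathcal{L}_{1,\infty}$: indeed, if $0\leq X,Y\in\mathcal{L}_{1,\infty}$ with $X\prec\prec Y$, then for every $N\geq 0$
$$\frac{1}{\log(N+2)}\sum_{k=0}^N\mu(k,X)\leq\frac{1}{\log(N+2)}\sum_{k=0}^N\mu(k,Y),$$
and positivity of the generalized limit $\omega$ yields ${\rm Tr}_\omega(X)\leq{\rm Tr}_\omega(Y)$. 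Applying this with $X=B\otimes C$ and $Y=A\otimes C$, and then invoking the Fubini identity \eqref{second assertion} from Theorem \ref{little fubini}, we obtain
$${\rm Tr}_\omega(B)\cdot{\rm Tr}(C)\leq{\rm Tr}_\omega(A)\cdot{\rm Tr}(C).$$
Since $C$ is a nonzero positive element of $\mathcal{L}_1$, we have ${\rm Tr}(C)>0$, so dividing gives ${\rm Tr}_\omega(B)\leq{\rm Tr}_\omega(A)$.

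For the second stage, pick a sequence $(B_n)$ in ${\rm Catal}(A,\mathcal{L}_{1,\infty})$ with $\|B_n-B\|_{1,\infty}\to 0$. By linearity of ${\rm Tr}_\omega$ and the continuity estimate \eqref{continuity},
$$|{\rm Tr}_\omega(B_n)-{\rm Tr}_\omega(B)|=|{\rm Tr}_\omega(B_n-B)|\leq\|B_n-B\|_{1,\infty}\to 0.$$
Since each ${\rm Tr}_\omega(B_n)\leq{\rm Tr}_\omega(A)$ by the first stage, the inequality passes to the limit.

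I expect no serious obstacle, since both main ingredients are already packaged in the preceding section: Theorem \ref{little fubini} supplies the multiplicative behaviour of ${\rm Tr}_\omega$ on tensor products with trace class operators, and \eqref{continuity} gives the $\|\cdot\|_{1,\infty}$-continuity needed to pass to the closure. The only point not stated verbatim earlier is the submajorization monotonicity of ${\rm Tr}_\omega$, which is immediate from the defining formula in Theorem \ref{dixmier-traces} combined with positivity of $\omega$.
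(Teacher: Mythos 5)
Your proposal is correct and follows essentially the same route as the paper: reduce to $B\in{\rm Catal}(A,\mathcal{L}_{1,\infty})$ via the continuity estimate \eqref{continuity}, use the monotonicity of ${\rm Tr}_\omega$ under submajorization (immediate from the defining formula and positivity of $\omega$), and conclude with the Fubini identity \eqref{second assertion} together with ${\rm Tr}(C)>0$. The only difference is the order of the two stages, which is immaterial.
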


\begin{proof}
We know from \eqref{continuity} that Dixmier traces are continuous on $\mathcal{L}_{1,\infty}$, and therefore we may
assume that $B \in {\rm Catal}(A,\mathcal{L}_{1,\infty})$. 
By definition of the latter set (see Question \ref{meaningful question}),
there exists a nonzero positive $C$ in $\mathcal{L}_1$ with the property that $B \otimes C \prec \prec A \otimes C$. 
Combining the definition of Hardy-Littlewood submajorization $\prec\prec$ and the definition of a Dixmier trace ${\rm Tr}_{\omega}$ (see Theorem \ref{dixmier-traces}) we infer that the 
inequality  ${\rm Tr}_{\omega}( B \otimes C) \leq {\rm Tr}_{\omega}(A \otimes C)$ holds for every Dixmier trace
${\rm Tr}_{\omega}$. Inequality \eqref{inequality-dixmier} now follows from \eqref{second assertion} and from the fact that ${\rm Tr} (C) > 0$. 
\end{proof}

\section{The case of $\mathcal{L}_{1,\infty}$: the main argument} \label{section:proof}

Here is the main technical result used in the proof of Theorem \ref{main}.
In the lemma below, we tacitly identify a sequence in the space $\ell_{\infty}$ with the corresponding diagonal operator.
For $I \subset \mathbb{N}$, we note by $\chi_I$ the sequence defined by $\chi_I(n)=1$ if $n \in I$ and $\chi_I(n)=0$ otherwise.

\begin{lem} \label{lem:computations}
Let $I$ be the subset of $\mathbb{N}$ defined as 
$$I=\bigcup_{n \geq 0} [ 2^{2n}, 2^{2n+1} ) .$$
Consider the operator\footnote{In the subsequent formulas, the symbol $\oplus$ stands for the direct sum of operators.}
$$B=\bigoplus_{m \in I} 2^{-m} \chi_{[0,2^m)}.$$
Then $B \in \mathcal{L}_{1,\infty}$. Moreover,
\begin{equation} \label{eq:computations}
\limsup_{s \to 0+} s {\rm Tr} ( B^{1+s} ) \leq \frac{5}{9 \log 2} < \frac{2}{3 \log 2} \leq \limsup_{N \to \infty} \frac{1}{\log N} \sum_{k=0}^N \mu(k,B).
\end{equation}
\end{lem}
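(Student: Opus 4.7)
The plan is to work directly with the spectral decomposition of $B$: each $m \in I$ contributes $2^m$ copies of the eigenvalue $2^{-m}$. A simple count of eigenvalues exceeding $2^{-M}$ (dominated by the last block, whose size is $\leq 2^{M+1}$) shows $\sup_k (k+1)\mu(k,B) < \infty$, so $B \in \mathcal{L}_{1,\infty}$. For the left-hand inequality of \eqref{eq:computations}, I would compute
\[
{\rm Tr}(B^{1+s}) = \sum_{m \in I} 2^{-ms} = \frac{1}{1-2^{-s}} \sum_{n \geq 0} \varphi(4^n s),
\]
where $\varphi(x) := 2^{-x}(1-2^{-x})$, by summing the geometric series on each block $[2^{2n},2^{2n+1})$. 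Multiplying by $s$ and noting that $s/(1-2^{-s}) \to 1/\log 2$ reduces the target to
\[
\limsup_{s \to 0^+} \sum_{n \geq 0} \varphi(4^n s) \leq \tfrac{5}{9}.
\]

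I would then parametrize $s = 4^{-N}\sigma$ with $\sigma \in [1,4)$ and let $N \to \infty$; the partial sum converges to the two-sided series $F(\sigma) := \sum_{k \in \mathbb{Z}} \varphi(4^k \sigma)$, and it suffices to show $\sup_\sigma F(\sigma) \leq 5/9$. The decisive structural input is $\varphi(x) = 2^{-x} - 2^{-2x}$, from which telescoping yields $\sum_{\ell \geq 0}\varphi(2^\ell\sigma) = 2^{-\sigma}$ and $\sum_{\ell \leq -1}\varphi(2^\ell\sigma) = 1 - 2^{-\sigma}$. Splitting these identities into even-$\ell$ and odd-$\ell$ parts produces the functional equation $F(\sigma) + F(2\sigma) = 1$ together with the analogues $F_+(\sigma)+F_+(2\sigma) = 2^{-\sigma}$ and $F_-(\sigma)+F_-(2\sigma) = 1 - 2^{-\sigma}$ for the one-sided pieces $F_+(\sigma) := \sum_{k \geq 0}\varphi(4^k\sigma)$ and $F_-(\sigma) := \sum_{k \leq -1}\varphi(4^k\sigma)$. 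Lower-bounding $F_+(2\sigma)$ and $F_-(2\sigma)$ by finitely many of their defining terms gives the explicit majorization
\[
F(\sigma) \leq 1 - \sum_{j=0}^{L}\bigl[\varphi(2\cdot 4^j\sigma) + \varphi(\sigma/(2\cdot 4^j))\bigr],
\]
and I would then verify that for a modest value of $L$ the right-hand side is $\leq 5/9$ uniformly on $[1,4)$ by a direct analytic check.

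For the right-hand inequality, I would evaluate the partial sums at the thresholds $N_n = \sum_{j=0}^n(2^{2^{2j+1}} - 2^{2^{2j}})$ corresponding to having processed the blocks $0,\ldots,n$. The last term dominates, giving $\log N_n \sim 2^{2n+1}\log 2$, while
\[
S_n := \sum_{k=0}^{N_n-1}\mu(k,B) = \sum_{j=0}^n\sum_{m=2^{2j}}^{2^{2j+1}-1} 2^m\cdot 2^{-m} = \sum_{j=0}^n 2^{2j} = \tfrac{1}{3}(4^{n+1}-1),
\]
so $S_n/\log N_n \to 2/(3\log 2)$, supplying the required lower bound.

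The hardest step is the uniform bound $F(\sigma) \leq 5/9$: numerically $\sup F$ sits close to $1/2$, so there is slack, but converting this into a rigorous uniform estimate requires tracking several telescoping tails and optimizing the truncation parameter $L$. Everything else is routine geometric-series bookkeeping, presumably deferred to Section~\ref{section:computations}.
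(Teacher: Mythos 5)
Your treatment of membership in $\mathcal{L}_{1,\infty}$ and of the right-hand inequality in \eqref{eq:computations} is essentially the paper's: count eigenvalues blockwise, evaluate the partial sums at the end of the $n$-th block where $\sum_{k<N_n}\mu(k,B)=\tfrac13(4^{n+1}-1)$ while $\log N_n\le 2^{2n+1}\log 2$; this part is correct. For the left-hand inequality you take a genuinely different route. The paper applies the double Abel-summation identity \eqref{summation} to $\chi_I$, bounding $\limsup_{s\to0^+}s\,{\rm Tr}(B^{1+s})$ by $\tfrac{2C}{\log 2}$ with $C=\limsup_m\tfrac1{(m+1)^2}\sum_{k\le m}\sum_{l\le k}\chi_I(l)$, and then computes $C$ exactly: after passing to the continuous $4$-periodic model $z$, the extremal problem becomes $\sup_{t\in(1,4)}\tfrac1{t^2}\int_0^t z(u)(t-u)\,\mathrm{d}u$, a piecewise rational function maximized at $t=12/5$ with value exactly $5/18$. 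You instead sum the geometric series on each block to get ${\rm Tr}(B^{1+s})=(1-2^{-s})^{-1}\sum_{n\ge0}\varphi(4^ns)$ with $\varphi(x)=2^{-x}-2^{-2x}$, and reduce to $\sup_\sigma F(\sigma)\le 5/9$ for $F(\sigma)=\sum_{k\in\mathbb{Z}}\varphi(4^k\sigma)$. Your identities ($F(\sigma)+F(2\sigma)=1$ and the one-sided telescopings) all check out, and your method actually targets the sharp quantity $\tfrac{1}{\log 2}\sup F$ (numerically about $0.503/\log 2$), whereas the paper's Ces\`aro-type bound is lossy by design.

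The gap is in the decisive step: the uniform bound $\sum_{j=0}^L\bigl[\varphi(2\cdot4^j\sigma)+\varphi(\sigma/(2\cdot4^j))\bigr]\ge 4/9$ on $[1,4)$ for some explicit $L$ is asserted, not proved, and it is not routine. It requires a rigorous uniform lower bound for a sum of eight or so exponentials on an interval (a grid evaluation together with derivative control, or a unimodality argument), none of which is supplied; this is exactly the computation the paper's reformulation avoids, since there the constant $5/18$ falls out of a closed-form optimization of a piecewise rational function. A secondary point to make explicit: when identifying $\limsup_{s\to0^+}\sum_{n\ge0}\varphi(4^ns)$ with $\sup_{\sigma\in[1,4)}F(\sigma)$ via $s=4^{-N}\sigma$, you should note that $\sum_{k\ge-N}\varphi(4^k\sigma)\to F(\sigma)$ uniformly on $[1,4)$ (the missing tail is $O(4^{-N})$ because $\varphi(x)\le x\log 2$); you use this implicitly in both directions of the identification.
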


Let us postpone the proof of Lemma \ref{lem:computations} and show how it implies  
the result stated in Theorem \ref{main}.
Consider $B$ as in Lemma \ref{lem:computations} and fix a number 
$\alpha$ such that $\frac{5}{9 \log 2} < \alpha < \frac{2}{3 \log 2}$. Recall that $A_0 = {\rm diag}(\{1,\frac12,\frac13,\cdots\})$.
Since 
$$ \lim_{s \to 0+} s {\rm Tr} ( (\alpha A_0)^{1+s} ) = \lim_{s \to 0+} s \zeta(s) \alpha^{1+s} = \alpha,$$
it follows from \eqref{eq:computations} that there exists $\delta > 0$ such that the inequality
\begin{equation} \label{eq:traces-A-B}
{\rm Tr} (B^{1+s}) \leq {\rm Tr} ((\alpha A_0)^{1+s})
\end{equation}
holds whenever $0<s\leq\delta.$ Define the operator $A = \alpha A_0 \oplus \|B\|_{1+\delta} p$
where $p$ is a rank one projection. We claim that $B \in {\rm PM}(A,\mathcal{L}_{1,\infty})$: indeed, for $s > \delta$ we
may write 
$$ {\rm Tr} (B^{1+s}) = {\rm Tr} \left( (B^{1+\delta})^{\frac{1+s}{1+\delta}} \right) \leq 
\left( {\rm Tr} (B^{1+\delta}) \right)^{\frac{1+s}{1+\delta}} = \|B\|_{1+\delta}^{1+s} \leq {\rm Tr} (A^{1+s})$$
while for $0<s \leq \delta$ the inequality ${\rm Tr} (B^{1+s}) \leq {\rm Tr} (A^{1+s})$ follows immediately from 
\eqref{eq:traces-A-B}.

We now assume by contradiction that $B$ belongs to the set $\overline{\rm Catal}(A,\mathcal{L}_{1,\infty})$. 
We know from Corollary \ref{corollary-dixmiertraces} that  ${\rm Tr}_{\omega}(B)\leq {\rm Tr}_{\omega}(A)$ 
for every Dixmier trace ${\rm Tr}_{\omega}$. Observing that any such trace vanishes on finite rank operators, we 
see that the value ${\rm Tr}_{\omega}(A)$ coincides with ${\rm Tr}_{\omega}(\alpha A_0)$ and hence is equal to 
$\alpha$ for for every Dixmier trace ${\rm Tr}_{\omega}$ (see the definition given in Theorem \ref{dixmier-traces}). On the other hand, we may choose a generalized limit $\omega$ such that
$$ {\rm Tr}_{\omega}(B) = \limsup_{N \to \infty} \frac{1}{\log N} \sum_{k=0}^N \mu(k,B)$$
and obtain from \eqref{eq:computations} that $\frac{2}{3 \log 2} \leq \alpha$, a contradiction.

We note that the Dixmier trace considered in the proof does not behave in a monotone way with respect to trace of powers:
we have ${\rm Tr}(B^p) \leq {\rm Tr}(A^p)$ for every $p > 1$, but ${\rm Tr}_{\omega}(B) > {\rm Tr}_{\omega}(A)$.

\section{Proof of Lemma \ref{lem:computations}} \label{section:computations}

Let $I$ and $B$ be as defined in Lemma \ref{lem:computations}, and denote by $E_B$ the spectral measure of $B$. 
First, note that for every integer $m$,
\begin{equation} \label{trace-B} {\rm Tr}(E_B(2^{-m},\infty))\leq\sum_{l<m}2^l\leq 2^m. \end{equation}
Hence, for every positive integer $n$, writing $2^m\leq n<2^{m+1}$, we infer
\begin{equation}\label{equivalently}{\rm Tr}(E_B(\frac 1n,\infty))\leq {\rm Tr}(E_B((2^{-m-1},\infty))\leq 2^{m+1}\leq 2n.
\end{equation}
Recall also (see e.g. \cite[Chapter 2, Section 2.3]{LSZ}) that $\mu(k,B)$, $ k\ge 0$ can be computed via the formula
$$
\mu(k,B)=\inf\{s\ge 0:\ {\rm Tr}(E_{|B|}((s,\infty))\leq k\}
$$
Hence, it follows from \eqref{equivalently} that $\mu(k,B)\leq\frac{2}{k+1}$ for every $k\geq0$ and, in particular, 
$B\in\mathcal{L}_{1,\infty}.$
We now prove the right inequality in \eqref{eq:computations}. For a given $n$, let $N={\rm Tr}(E_B(2^{-2^{2n+1}},\infty))$. We
know from \eqref{trace-B} that $N \leq 2^{2^{2n+1}}$. Therefore,
$$\sum_{k=0}^{N-1}\mu(k,B)={\rm Tr}(BE_B(2^{-2^{2n+1}},\infty))= {\rm card} ( I \cap [0,2^{2n+1}] )
=\frac23\cdot 2^{2n+1} -\frac13.$$
Hence, for $N$ as above, we have
$$\frac1{\log(N)}\sum_{k=0}^{N-1}\mu(k,B)\geq\frac1{\log(2^{2^{2n+1}})}\cdot\Big(\frac23\cdot 2^{2n+1}-\frac13\Big)=\frac{2}{3\log(2)}+o(1).$$
as needed.

We now focus on the left inequality in \eqref{eq:computations} and use the following summation formula, whose proof
we postpone.
For a given sequence $(x_n) \in \ell_{\infty}$ and for a given $s>0,$ we have that
\begin{equation} \label{summation} \sum_{m=0}^{\infty} \left( \sum_{k=0}^m \sum_{l=0}^k x_l \right) 2^{-ms} 
= (1-2^{-s})^{-2} \sum_{l=0}^{\infty} x_l 2^{-ls} .
\end{equation}

Note that ${\rm Tr} (B^{1+s}) = \sum_{m \in I} 2^{-ms} = \sum_{m \geq 0} \chi_I(m) 2^{-ms}$ (here, $\chi_I(0) =0$). Applying
\eqref{summation} to $x = \chi_I$, we obtain, for every $M > 0$
\begin{align*}
\limsup_{s\to0+}s&\sum_{l\geq0}\chi_I(l)2^{-ls}=\limsup_{s\to0+}s(1-2^{-s})^2\sum_{m\geq0}(\sum_{k=0}^m\sum_{l=0}^k \chi_I(l))2^{-ms}\cr&
=\limsup_{s\to0+}s(1-2^{-s})^2\sum_{m\geq M}\Big(\frac1{(m+1)^2}\sum_{k=0}^m\sum_{l=0}^k\chi_I(l)\Big)\cdot (m+1)^22^{-ms}\cr&
\leq\Big(\sup_{m\geq M}\frac1{(m+1)^2}\sum_{k=0}^m\sum_{l=0}^k \chi_I(l)\Big)\cdot\Big(\limsup_{s\to0+}s(1-2^{-s})^2\sum_{m\geq M}(m+1)^22^{-ms}\Big).
\end{align*}
Passing $M\to\infty,$ we infer that
$$\limsup_{s\to0+}s\sum_{l\geq0}\chi_I(l)2^{-ls}\leq C  \limsup_{s\to0+}s(1-2^{-s})^2\sum_{m\geq 0}(m+1)^22^{-ms},$$
where 
$$C := \limsup_{m\to\infty}\frac1{(m+1)^2} \sum_{k=0}^m\sum_{l=0}^k\chi_I(l).$$
An elementary computation gives
$$ \sum_{m=0}^{\infty} (m+1)^2 2^{-ms} = \frac{1+2^{-s}}{(1-2^{-s})^3}.$$
It follows that
$$ \limsup_{s \to 0+} s {\rm Tr} (B^{1+s}) \leq \frac{2C}{\log 2}.$$
It remains to show that $C \leq 5/18$ (we actually show $C=5/18$). To that end, we think of $\chi_I$ as an element of $L_{\infty}(0,\infty)$
and define $z\in L_{\infty}(0,\infty)$ by setting $z=\chi_{\bigcup_{n\in\mathbb{Z}}[2^{2n},2^{2n+1})}.$ 
Observe that $\chi_I \leq z.$ Therefore, 
$$C \leq \limsup_{t\to\infty}\frac1{t^2}\int_0^t\int_0^sz(u) \, \mathrm{d}u \mathrm{d}s.$$
Since $z(4t)=z(t)$ for every $t>0,$ applying Fubini's theorem we have 
$$C \leq \sup_{t\in(1,4)}\frac1{t^2}\int_0^tz(u)(t-u) \,\mathrm{d}u.$$
However,
$$\frac1{t^2}\int_0^tz(u)(t-u) \,\mathrm{d}u=
\begin{cases}
\frac{1}{2}-\frac{2}{3t}+\frac{2}{5t^2},\quad 1\leq t\leq 2\\
\frac{4}{3t}-\frac{8}{5t^2},\quad 2\leq t\leq 4
\end{cases}
$$
Hence, the latter supremum is, in fact, a maximum which is attained at $t=\frac{12}{5}$ and equal to $\frac{5}{18}$.

\begin{proof}[Proof of \eqref{summation}]
Write 
\begin{align*}\sum_{m\geq0}(\sum_{k=0}^m\sum_{l=0}^kx_l)2^{-ms}&=\sum_{m\geq k\geq 0}(\sum_{l=0}^kx_l)2^{-ms}=\sum_{k=0}^{\infty}(\sum_{l=0}^kx_l)\sum_{m=k}^{\infty}2^{-ms}\cr &
=(1-2^{-s})^{-1}\sum_{k=0}^{\infty}(\sum_{l=0}^kx_l)2^{-ks}=(1-2^{-s})^{-1}\sum_{k\geq l\geq 0}x_l2^{-ks}\cr
&=
(1-2^{-s})^{-1}\sum_{l=0}^{\infty}x_l\sum_{k=l}^{\infty}2^{-ks}=(1-2^{-s})^{-2}\sum_{l=0}^{\infty}x_l2^{-ls}. \qedhere
\end{align*}
\end{proof}


\begin{thebibliography}{99}
\bibitem{Au-Nec} Aubrun G., Nechita I. {\it Catalytic majorization and $l_p$ norms.} Comm. Math. Phys. {\bf 278} (2008), no. 1, 133--144.
\bibitem{CDSS} Chilin V., Dodds P., Sedaev A., Sukochev F. {\it Characterisations of Kadec-Klee properties in symmetric spaces of measurable functions.} Trans. Amer. Math. Soc. {\bf 348} (1996), no. 12, 4895--4918.
\bibitem{Calkin} Calkin J. {\it Two-sided ideals and congruences in the ring of bounded operators in Hilbert space,} Ann. of Math. (2) {\bf 42}, (1941) 839--873.
\bibitem{CDS} Chilin V., Dodds P., Sukochev F. {\it The Kadec-Klee property in symmetric spaces of measurable operators.} Israel J. Math. {\bf 97} (1997), 203--219.
\bibitem{DFWW} Dykema K., Figiel T., Weiss G., Wodzicki M. {\it Commutator structure of operator ideals.} Adv. Math. {\bf 185} (2004), no. 1, 1--79.
\bibitem{Edwards} Edwards R. {\it Functional Analysis. Theory and applications.}  Corrected reprint of the 1965 original. Dover Publications, Inc., New York, 1995
\bibitem{JP} Jonathan D., Plenio M. {\it Entanglement-assisted local manipulation of pure quantum states} Phys. Rev. Lett. {\bf 83}  (1999),  no. 17, 3566--3569.
\bibitem{KS} Kalton N,  Sukochev F.  {\it Symmetric norms and spaces of operators,} J. Reine Angew. Math. {\bf 621} (2008), 81--121.
\bibitem{Klimesh} Klimesh M. {\it Inequalities that completely characterize the Catalytic Majorization Relation}, arXiv eprint \texttt{0709.3680} 
\bibitem{LT1} Lindenstrauss J.,  Tzafriri L. \textit{Classical Banach Spaces I}, Springer-Verlag, 1977.
\bibitem{LSZ} Lord S., Sukochev F., Zanin D. {\it Singular traces. Theory and applications.} De Gruyter Studies in Mathematics, {\bf 46}. De Gruyter, Berlin, 2013. xvi+452 pp.
\bibitem{SS} Sedaev A., Sukochev F. {\it Dixmier measurability in Marcinkiewicz spaces and applications.} J. Funct. Anal. {\bf 265} (2013), no. 12, 3053--3066.
\bibitem{Simon} Simon B. {\it Trace ideals and their applications,} Second edition. Mathematical Surveys and Monographs, {\bf 120}. American Mathematical Society, Providence, RI, 2005.
\bibitem{S} Sukochev F.  {\it Completeness of quasi-normed symmetric operator spaces}.  Indag. Math. (N.S.) {\bf 25} (2014), no. 2, 376--388.
\bibitem{Turgut} Turgut S. {\it Catalytic transformations for bipartite pure states.} J. Phys. A: Math. Theor. {\bf 40} (2007), 12185--12212.
\end{thebibliography}
\end{document}